\newtheorem{theorem}{Theorem}
\theoremstyle{definition}
\theoremstyle{remark}
\DeclareMathOperator{\erfc}{erfc}
\begin{document}

\title[]{Effective Bounds for the Decay of Schr\"odinger Eigenfunctions and Agmon bubbles}
\subjclass[2010]{31B15, 35J10, 81Q05.} 
\thanks{Partially supported by the NSF (DMS-2123224) and the Alfred P. Sloan Foundation.}
\keywords{Agmon estimate, Agmon metric, Schr\"odinger operator, eigenfunction, exponential decay, harmonic measure, diffusion, Brownian motion, Bessel process.}

\author[]{Stefan Steinerberger}
\address{Department of Mathematics, University of Washington, Seattle, WA 98195, USA}
\email{steinerb@uw.edu}

\begin{abstract} We study solutions of $-\Delta u + V u = \lambda u$ on $\mathbb{R}^n$. Such solutions localize in the `allowed' region $\left\{x \in \mathbb{R}^n: V(x) \leq \lambda\right\}$ and decay exponentially
in the `forbidden' region $\left\{x \in \mathbb{R}^n: V(x) > \lambda\right\}$. One way of making this precise is Agmon's inequality 
implying decay estimates in terms of the Agmon metric. We prove a complementary decay estimate in terms of harmonic measure which can improve on Agmon's estimate, connect the Agmon metric to decay of harmonic measure and prove a sharp
pointwise Agmon estimate. 
\end{abstract}

\maketitle

\section{Introduction}
\subsection{Introduction.} We consider eigenfunctions of Schr\"odinger operators
$$ - \frac12 \Delta u(x) + V(x) u(x) = \lambda u(x), \qquad x \in \mathbb{R}^n,$$
where the potential $V(x) \geq 0$ is assumed to grow $V(x) \rightarrow \infty$ as $\|x\| \rightarrow \infty$.  $V$ could have extremely rapid growth outside some region $\Omega \subset \mathbb{R}^n$ which emulates
the Dirichlet problem in $\Omega$. 
 Multiplying
with $u$ and integration by parts leads to
$$ \frac12  \int_{\mathbb{R}^n} | \nabla u|^2 dx + \int_{\mathbb{R}^n} V(x)  u(x)^2 dx = \int_{\mathbb{R}^n} \lambda  u(x)^2 dx$$
This identity suggests that most of the $L^2-$mass of $u$ has to be contained in the `allowed' region $E = \left\{x \in \mathbb{R}^n: V(x) \leq \lambda \right\}$
and only very little can be contained in the `forbidden' region $\left\{x \in \mathbb{R}^n: V(x) > \lambda \right\}$. Agmon's approach \cite{agmon1} is now classical: define the \textit{Agmon metric} between two points $x,y \in \mathbb{R}^n$ as the minimum
energy taken over all paths from $x$ to $y$
$$ \rho_{\lambda}(x,y) = \inf_{\gamma} \int_0^1 \max\left( \sqrt{2} \sqrt{V(\gamma(t))- \lambda}, 0\right) |\dot \gamma(t)| dt,$$
where $\gamma:[0,1] \rightarrow \mathbb{R}^n$ ranges over all paths from $\gamma(0) = x$ to $\gamma(1) = y$. It is
understood that we can expect, in a suitable sense, for all $\varepsilon > 0$, that
$$ |u(x)| \leq c_{\varepsilon} \sup_{y \in \mathbb{R}^n \atop V(y) \leq \lambda} e^{-(1-\varepsilon) \rho_{\lambda}(x,y)}.$$
This decay estimate is phrased in terms of the minimal Agmon distance between the point $x$ and the allowed region $E= \left\{y \in \mathbb{R}^n: V(y) \leq \lambda\right\}$. We abbreviate
$$ \rho_{\lambda}(E, x) = \inf_{y \in E} \rho_{\lambda}(x,y)$$
allowing us to rephrase the Agmon estimate as
$$ |u(x)| \leq c_{\varepsilon} \exp\left(-(1-\varepsilon) \rho_{\lambda}(E,x)\right).$$
Note that many papers in the literature are concerned with the asymptotic decay as $\|x\| \rightarrow \infty$ and write the Agmon estimate in terms of $\rho_{\lambda}(0,x)$ which is equivalent up to constants. Since we are interested, among other things, in sharp estimates, we will always work with $ \rho_{\lambda}(E,x)$.
It is understood that the Agmon estimate may be very effective and can lead to optimal results. Consider the following simple example: $u:[0, \infty] \rightarrow \mathbb{R}$ given by
$u(x) = \exp(-x)$ satisfies the equation
$$ -\frac12 \Delta u(x) + V(x) u(x) = \lambda u(x) \qquad \mbox{with}~V(x) = \frac12~\mbox{and}~ \lambda=0.$$
We see that the Agmon metric is given by
$$   \rho_{\lambda}(x,0) = \int_0^{x} \sqrt{2} \sqrt{ \frac12 - 0} ~ dt = x$$
and we recover the exact equation
$ u(x) =  e^{-\rho_{\lambda}(0,x)}.$
However, it is also understood that the estimate may be ineffective and it is not difficult to construct such examples (see \S 4.3 for an extreme setting). We establish a new decay estimate (Theorem 1) which can complement and improve on Agmon's estimate in settings where it may be ineffective. We also prove that whenever Agmon's estimate is approximately sharp in a point $x$, then the harmonic measure is not too small: there has to be an entire tube around the optimal Agmon path (Theorem 2). 
Our approach also yields a sharp pointwise Agmon estimates (Theorem 3).

\subsection{The Agmon estimate.} 
We quickly discuss how Agmon estimates are often motivated: introducing the semi-classical scaling
$$ -h^2 \Delta u + Vu = \lambda u,$$
there is (assuming some moderate growth on $V$) an identity, valid for any (sufficiently regular) $\phi: \mathbb{R}^n \rightarrow \mathbb{R}$
$$ \int_{\mathbb{R}^n} \left| h \nabla  \exp\left( \frac{\phi}{h} \right) \right|^2 dx + \int_{\mathbb{R}^n} \left(V - \lambda - | \nabla \phi|^2 \right) \exp\left(\frac{2\phi}{h} \right) |u|^2 dx = 0.$$
This implies, in particular,
$$  \int_{\mathbb{R}^n} \left(V - \lambda - | \nabla \phi|^2 \right) \exp\left(\frac{2\phi}{h} \right) |u|^2 dx \leq 0.$$
What kind of $\phi$ would extract the most information? Since one of the factor is inside the exponential
function, we would like to make $\phi$ as large as possible without introducing negative quantities anywhere (since those could lead to cancellation). This means we want to ensure that $\phi$ is as large as possible while simultaneously satisfying
$ | \nabla \phi|^2 \leq V - \lambda.$
This, naturally, leads to the Agmon metric. Once this is established, one can deduce pointwise estimates can be established as follows: suppose $u$ is much larger than predicted by the Agmon estimate in a point $x_0$. Basic elliptic estimates imply that $u$ is somewhat comparable in a radius around $x_0$ which then leads to a substantial contribution to the integral. However, note that the underlying heart of the argument is clearly an integral identity.

\section{Main Results}
\subsection{Agmon bubbles.} We start by introducing a new decay estimate. Suppose  
$$ - \frac12\Delta u + V u = \lambda u \qquad \mbox{in}~\mathbb{R}^n.$$ Suppose furthermore that, for all points in the `forbidden' region $ \left\{x: V(x) > \lambda\right\}$, the solution $u$ satisfies, for some $\varepsilon > 0$, an Agmon estimate of the type
$$ |u(x)| \leq c_{\varepsilon} \exp\left((1-\varepsilon)\cdot \rho_{\lambda}(E,x) \right).$$  For any point $x$ in the `forbidden' region and any  $\alpha > \rho_{\lambda}(x, E)$, we define
$$\Omega_{\alpha} = \left\{y \in \mathbb{R}^n \setminus E: \rho_{\lambda}(E,y) \leq \alpha \right\} \subset \mathbb{R}^n \setminus E.$$
We interpret $\Omega_{\alpha} \subset \mathbb{R}^n$ as a (possible unbounded) subset of $\mathbb{R}^n$ and will refer to these
sets as `Agmon bubbles'.
Since  $\alpha > \rho_{\lambda}(x, E)$, we have $x \in \Omega_{\alpha}$.

\begin{theorem}[Agmon bubbles] We have, for any $\alpha > \rho_{\lambda}(x, E)$,
$$ \left|u(x)\right| \leq c_{\varepsilon} e^{-(1-\varepsilon) \alpha} + \omega_x^{(\Omega_{\alpha})}(\partial E) \cdot \|u\|_{L^{\infty}(\mathbb{R}^n)},$$
where $\omega^{(\Omega_{\alpha})}_x(\partial E)$ is the harmonic measure of $\partial E \subset \Omega_{\alpha}$ with respect to the pole $x$.
\end{theorem}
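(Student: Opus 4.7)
The plan is to exploit the fact that in the forbidden region $\{V > \lambda\}$ the operator has the right sign to make $|u|$ subharmonic, and then apply the classical harmonic-measure representation of the maximum principle on $\Omega_\alpha$, splitting its boundary into the piece where the hypothesized Agmon estimate gives decay and the piece where we fall back on the trivial $L^\infty$ bound.

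First, I would rewrite the eigenvalue equation as $\Delta u = 2(V-\lambda)u$. Since $\Omega_\alpha \subset \mathbb{R}^n \setminus E$, the coefficient $2(V-\lambda)$ is non-negative on $\Omega_\alpha$. By Kato's inequality $\Delta|u| \geq \operatorname{sgn}(u)\,\Delta u = 2(V-\lambda)|u| \geq 0$ in the distributional sense, so $|u|$ is a non-negative subharmonic function on $\Omega_\alpha$. (A convenient alternative is to work with $u^+$ and $u^-$ separately; each is subharmonic by the same calculation together with the weak maximum principle, which avoids any Kato-inequality bookkeeping.)

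Second, I would invoke the Poisson-type representation for subharmonic functions: on a bounded regular subdomain $D$ with $x \in D$, one has $|u(x)| \leq \int_{\partial D} |u(y)|\,d\omega_x^{(D)}(y)$. Taking $D = \Omega_\alpha$, the topological boundary splits as $\partial\Omega_\alpha = (\partial E \cap \overline{\Omega_\alpha}) \cup \Gamma_\alpha$, where $\Gamma_\alpha = \{y : \rho_\lambda(E,y) = \alpha\}$ is the outer level set. On the inner piece we simply bound $|u| \leq \|u\|_{L^\infty(\mathbb{R}^n)}$; on $\Gamma_\alpha$ we apply the standing Agmon hypothesis to obtain $|u(y)| \leq c_\varepsilon e^{-(1-\varepsilon)\alpha}$. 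Since $\omega_x^{(\Omega_\alpha)}(\Gamma_\alpha) \leq 1$, summing the two contributions yields exactly
\[
|u(x)| \leq c_\varepsilon e^{-(1-\varepsilon)\alpha} + \omega_x^{(\Omega_\alpha)}(\partial E)\cdot\|u\|_{L^\infty(\mathbb{R}^n)}.
\]

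The main technical obstacle is geometric: $\Omega_\alpha$ is stated to be possibly unbounded and its boundary need not be smooth enough to solve the Dirichlet problem classically. I would handle unboundedness by exhaustion, writing $\Omega_\alpha^{(R)} = \Omega_\alpha \cap B(0,R)$ and noting that the new boundary contribution on $\partial B(0,R) \cap \Omega_\alpha$ is controlled by $c_\varepsilon e^{-(1-\varepsilon)\alpha}$ already (every such point still has $\rho_\lambda(E,\cdot) \leq \alpha$ but lies in the forbidden region, so using the Agmon hypothesis there gives something bounded by the same quantity), then passing to $R \to \infty$. Boundary regularity is dealt with in the Perron--Wiener--Brelot framework, which defines $\omega_x^{(\Omega_\alpha)}$ on arbitrary open sets and for which the maximum principle still holds; alternatively one slightly shrinks $\Omega_\alpha$ to a regular approximation and passes to the limit, using continuity of harmonic measure in the domain. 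A minor subtlety near $\partial E$, where $V - \lambda$ degenerates, is harmless because only the sign of $\Delta|u|$ is used, not a strict lower bound.
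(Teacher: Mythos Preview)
Your argument is correct and complete, but it takes a genuinely different route from the paper's own proof. The paper works probabilistically: it introduces a rescaled Brownian motion $\omega_x^*$ with infinitesimal generator $\tfrac{1}{2(V-\lambda)}\Delta$, uses the Feynman--Kac identity $u(x)=\mathbb{E}[u(\omega_x^*(t\wedge\tau))e^{-(t\wedge\tau)}]$, drops the factor $e^{-(t\wedge\tau)}\le 1$, and then splits $\mathbb{E}|u(\omega_x^*(\tau))|$ according to whether the particle exits through $\partial E$ or through $\{\rho_\lambda(E,\cdot)=\alpha\}$, finally observing that the time change does not affect harmonic measure. You replace all of this with a single potential-theoretic observation: $\Delta|u|\ge 2(V-\lambda)|u|\ge 0$ on $\Omega_\alpha$, so $|u|$ is subharmonic and the harmonic-measure maximum principle applies directly.

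The two arguments are dual to one another --- your subharmonicity statement is exactly the PDE translation of the paper's inequality $\mathbb{E}[u(\omega_x^*(\tau))e^{-\tau}]\le\mathbb{E}|u(\omega_x^*(\tau))|$ --- but yours is considerably more elementary and self-contained for this particular theorem, requiring no stochastic machinery. What the paper's approach buys is a unified framework: the rescaled process $\omega_x^*$ and in particular the $e^{-\tau}$ factor that both proofs discard here are precisely what drive the sharper results (Theorems~2--4), where one needs to track \emph{how long} it takes to reach $\partial E$, not merely \emph{whether} one does. Your Kato-inequality route gives Theorem~1 cleanly but would not extend to those refinements without reintroducing the stochastic picture.
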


 The parameter $\alpha > \rho_{\lambda}(x, E)$
can be freely chosen and, in practical applications, it makes sense to optimize over it (see the subsequent examples).
We start by illustrating the result. Let us consider an example as in Fig. \ref{fig:bubb}: we have an allowed region $E$ which is surrounded by a porous ring on
which the potential $V$ assumes extremely large values. Since the ring is porous, there are Agmon paths which completely avoid the region where $V$ is
large: as long as there is any hole in the circle surrounding the disk, the Agmon estimate will not register it.  Theorem 1 provides a different type of estimate: since it measures porosity of the `ring' indirectly through harmonic measure, we obtain a suitable decay estimate.

\begin{center}
\begin{figure}[h!]
\begin{tikzpicture}[scale=1]
\draw[dashed] (6,0) circle (1.2cm);
\node at (6,0) {$E$};
\node at (6-2, -0.8) {$x$};
\draw[dashed] (6-2,-1) -- (6-0.7, -0.35);
\filldraw (6-2,-1) circle (0.04cm);
 \draw [ultra thick,domain=0:180] plot ({6+1.2*cos(\x)}, {1.2*sin(\x)});
  \draw [ultra thick,domain=190:200] plot ({6+1.2*cos(\x)}, {1.2*sin(\x)});
    \draw [ultra thick,domain=210:220] plot ({6+1.2*cos(\x)}, {1.2*sin(\x)});
        \draw [ultra thick,domain=230:240] plot ({6+1.2*cos(\x)}, {1.2*sin(\x)});
  \draw [ultra thick,domain=250:360] plot ({6+1.2*cos(\x)}, {1.2*sin(\x)});
   \draw [ultra thick,domain=0:180] plot ({6+1.23*cos(\x)}, {1.23*sin(\x)});
  \draw [ultra thick,domain=190:200] plot ({6+1.23*cos(\x)}, {1.23*sin(\x)});
    \draw [ultra thick,domain=210:220] plot ({6+1.23*cos(\x)}, {1.23*sin(\x)});
        \draw [ultra thick,domain=230:240] plot ({6+1.23*cos(\x)}, {1.23*sin(\x)});
  \draw [ultra thick,domain=250:360] plot ({6+1.23*cos(\x)}, {1.23*sin(\x)});
  \node at (3, 0.5) {`$V=\infty$'};
\draw [->] (3.7, 0.3) -- (4.5, 0.25);
\node at (12,0) {$E$};
\draw[dashed] (12,0) circle (1.2cm);
\node at (12-2, -0.8) {$x$};
\filldraw (12-2,-1) circle (0.04cm);
 \draw [ultra thick,domain=0:180] plot ({12+1.2*cos(\x)}, {1.2*sin(\x)});
  \draw [ultra thick,domain=190:200] plot ({12+1.2*cos(\x)}, {1.2*sin(\x)});
    \draw [ultra thick,domain=210:220] plot ({12+1.2*cos(\x)}, {1.2*sin(\x)});
        \draw [ultra thick,domain=230:240] plot ({12+1.2*cos(\x)}, {1.2*sin(\x)});
  \draw [ultra thick,domain=250:360] plot ({12+1.2*cos(\x)}, {1.2*sin(\x)});
   \draw [ultra thick,domain=0:180] plot ({12+1.23*cos(\x)}, {1.23*sin(\x)});
  \draw [ultra thick,domain=190:200] plot ({12+1.23*cos(\x)}, {1.23*sin(\x)});
    \draw [ultra thick,domain=210:220] plot ({12+1.23*cos(\x)}, {1.23*sin(\x)});
        \draw [ultra thick,domain=230:240] plot ({12+1.23*cos(\x)}, {1.23*sin(\x)});
  \draw [ultra thick,domain=250:360] plot ({12+1.23*cos(\x)}, {1.23*sin(\x)});
\draw [thick] (10, -0.4) to[out=320, in=180] (10.9, 0.5);
\draw [thick] (10, -0.4) to[out=130, in=90] (9.5, -1) to[out=270, in=180] (10, -1.3) to[out=0, in =255] (12.6, -1.1);
\node at (9.5, -1.5) {$\Omega_{\alpha}$};
\node at (9.5, 1.4) {$\rho_{\lambda}(E,\cdot) = \alpha$};
\draw [->] (9, 1) -- (9.6,-0.2);
\end{tikzpicture}
\caption{A sketch of an Agmon bubble attached to a point $x$.}
\label{fig:bubb}
\end{figure}
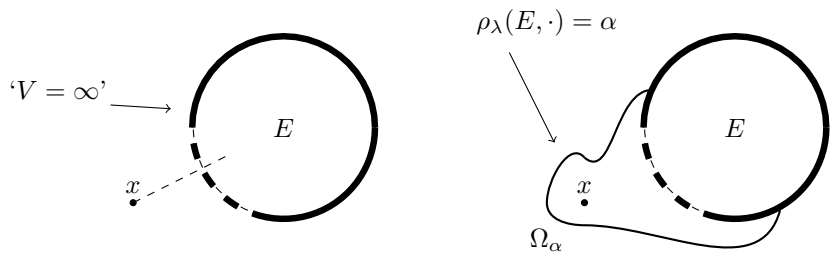
\end{center}

\textbf{An example.} 
We will now illustrate this in a simple case where all the computations can be done exactly. Consider the example 
$$ - \frac12 \Delta u + Vu = \lambda u\qquad \mbox{in}~\mathbb{R}^2,$$
where $\lambda = 0$, the allowed region $\left\{x: V(x) \leq \lambda \right\}$ is a ball around the origin of size $\sim 1$ and the potential
outside of $E$ is given by $V_{\varepsilon}(x,y) = \varepsilon + y^2$ (see Fig. \ref{fig:strips}). We expect $u$ to be localized in $E$. The Agmon distance is
approximately (depending a little on the precise shape and location of $E$), up to constants,
$$ \rho_{\lambda}(E, (x,y)) \sim \varepsilon |x| + \frac{y^2}{2}.$$
We see that Agmon's estimate implies relatively little decay in the $x-$direction, we only obtain $\sim \exp(-\varepsilon |x|)$. In particular, this becomes weaker and weaker as $\varepsilon \rightarrow 0$. One
could now wonder how accurate this can be: the rapid growth of the potential away from the $x-$axis should somehow enforce some type of uniform decay along the $x-$axis
even as $\varepsilon \rightarrow 0$.
\begin{center}
\begin{figure}[h!]
\begin{tikzpicture}[scale=1.3]
\draw [thick, ->] (0,0.5) -- (0,1);
\draw [thick, ->] (0,-0.5) -- (0,-1);
\draw [thick, ->] (0.5,0) -- (3.5,0);
\filldraw (2,0) circle (0.04cm);
\node at (2, -0.2) {$x$};
\draw[dashed] (0,0) circle (0.5cm);
\node at (0,0) {$E$};
\draw [<->] (-0.7, -0.5) -- (-0.7, 0.5);
\node at (-1, 0) {$\sim 1$};
\draw [dashed] (0.4, 0.7) -- (3.5, 0.7);
\draw [dashed] (0.4, -0.7) -- (3.5, -0.7);
\node at (2.8, 0.2) {$\Omega_{\alpha} \subset S_{\alpha}$};
\draw [<->] (5.2-1.5, -0.7) -- (5.2-1.5, 0.7);
\node at (5.8-1.5, 0) {$\sim\sqrt{8\alpha}$};
\end{tikzpicture}
\caption{The Agmon bubble is contained in an infinite strip.}
\label{fig:strips}
\end{figure}
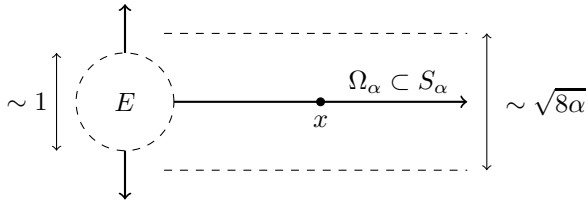
\end{center}

We will now apply Theorem 1, fix $x \gg 1$ and consider $u(x,0)$. Let $\alpha > 0$ be arbitrary
and consider the Agmon bubble
$$ \Omega_{\alpha} = \left\{(x,y) \in \mathbb{R}^2:  \rho_{\lambda}(E,x) \leq \alpha \right\}$$
which itself is contained in an infinite strip $S_{\alpha}$ of width $\sim \sqrt{8 \alpha}$. By simple domain monotonicity, we can bound
$$ \omega_x^{(\Omega_{\alpha})}(\partial E) \leq  \omega_x^{(S_{\alpha})}(\partial E).$$
 A standard estimate for harmonic measure implies
$$ \omega_x^{(S_{\alpha})}(\partial E) \leq  \exp \left( - c \frac{x}{  \sqrt{\alpha}}\right).$$
A version with explicit constant could, for example, be derived from a result of Betsakos \cite{bet}.
Using Theorem 1 and optimizing in $\alpha$ (setting $\alpha \sim x^{2/3}$) leads to
$$ |u(x,0)| \leq  e^{- c |x|^{2/3}} \cdot \|u\|_{L^{\infty}(\mathbb{R}^n)}$$
uniformly as $\varepsilon \rightarrow 0$.
This implies substantial decay in a regime where the classical Agmon estimate cannot deduce any information.\\

\textbf{Another Example.}
Consider the problem of understanding the behavior of the ground state shown in Fig. \ref{fig:row} which is comprised of four $1 \times 1$ squares on which $V$ assumes four different values. We assume that $V=\infty$ (or, somewhat equivalently, $V$ is extremely large) outside these four squares. The problem depends on the parameter
$m \gg 1$. As $m \rightarrow \infty$, we expect that the ground state of the problem localizes completely within the square where $V$ vanishes and that it
exhibits rapid decay into the three adjacent regions. 

\begin{center}
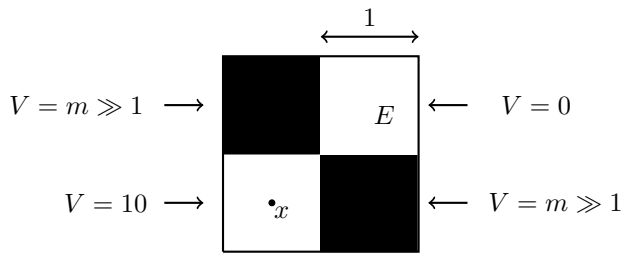
\begin{figure}[h!]
\begin{tikzpicture}[scale=1.3]
\draw [thick] (0,0) -- (2,0) -- (2,2) -- (0,2) -- (0,0);
\filldraw (1,0) rectangle ++(28pt,28pt);
\filldraw (0,1) rectangle ++(28pt,28pt);
\node at (3.4,0.5) {$V = m \gg 1$};
\draw [thick, ->] (2.5,0.5) -- (2.1, 0.5);
\node at (3.2,1.5) {$V=0$};
\draw [thick, ->] (2.5,1.5) -- (2.1, 1.5);
\filldraw (0.5, 0.5) circle (0.03cm);
\node at (0.6, 0.4) {$x$};
\node at (1.65, 1.4) {$E$};
\node at (-1.5,1.5) {$V = m \gg 1$};
\draw [thick,->] (-0.6, 1.5) -- (-0.2, 1.5);
\node at (-1.2,0.5) {$V = 10$};
\draw [thick,->] (-0.6, 0.5) -- (-0.2, 0.5);
\draw [thick,<->] (1, 2.2) -- (2, 2.2);
\node at (1.5, 2.4) {1};
\end{tikzpicture}
\caption{Using Theorem 1 to deduce decay for the ground state of this problem (imagine $V=\infty$ outside the square).}
\label{fig:row}
\end{figure}
\end{center}

\vspace{-20pt}

 Agmon's inequality immediately implies decay for the two squares in which $V = m \gg 1$ is very large. 
Agmon's inequality implies very mild decay for the region in which $V \equiv 10$ (this region is `mildly forbidden', we have $V > \lambda$, but barely so). However, the Agmon estimate does not imply stronger decay in that region as $m$ increases -- this is in contrast to classical intuition: as $m$ increases, the two squares in which $V \equiv m$ should start to act as an insulator and induce additional decay. 
We will now apply Theorem 1. First note that by using the ground state of a single square as a test function, we can immediately conclude that $\lambda_1(m) \leq \pi^2 < 10$. This allows us to conclude that the Agmon distance stays uniformly bounded in $m$
$$ \rho_{\lambda_1(m)}(E, x) \sim \sqrt{2}\sqrt{10 - \lambda_1(m)}d(x,E) \sim 1.$$
We first compute the Agmon bubble associated to the value $\rho_{\lambda_1(m)}(E, x) = \alpha$ where $\alpha$
is a parameter to be chosen later. 
The Agmon bubble will essentially contain the two squares where $V(x) \in \left\{0,10\right\}$
and it will contain a subregion of the squares where $V \equiv m$ is large. If $d\sigma$ denotes the arclength measure, then each step in
one of the two squares where $V \equiv m$ gets weighted by $\sqrt{m - \lambda_1(m)}~ d\sigma \sim \sqrt{m} ~d\sigma$. The bubble
$\Omega_{\alpha}$ will carve out a region of size $\sim \alpha/\sqrt{m}$ from the squares (see Fig. \ref{fig:megaexample}).

\begin{center}
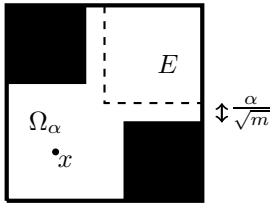
\begin{figure}[h!]
\begin{tikzpicture}[scale=1.3]
\draw [ultra thick] (0,0) -- (2,0) -- (2,2) -- (0,2) -- (0,0);
\filldraw (1.2,0) rectangle ++(23pt,23pt);
\filldraw (0,1.2) rectangle ++(23pt,23pt);
\filldraw (0.5, 0.5) circle (0.03cm);
\node at (0.6, 0.4) {$x$};
\node at (1.65, 1.4) {$E$};
\draw [thick, dashed] (1,2) -- (1,1) -- (2,1); 
\draw [thick, <->] (2.2, 1) -- (2.2, 0.8);
\node at (2.5, 0.9) {$ \frac{\alpha}{\sqrt{m}}$};
\node at (0.4, 0.8) {$\Omega_{\alpha}$};
\end{tikzpicture}
\caption{The Agmon bubble attached to the problem: $\Omega_{\alpha}$}
\label{fig:megaexample}
\end{figure}
\end{center}
\vspace{-20pt}

It remains to find an upper bound on the harmonic measure of $\partial E$ with respect to the point $x$: a simple consideration of the local
geometry implies the estimate 
$$\omega_x^{(\Omega_{\alpha})}(\partial E) \lesssim \alpha/\sqrt{m}$$ which, with Theorem 1, implies the decay estimate
\begin{align*}
 |u(x)| \lesssim c_{\varepsilon} e^{-(1-\varepsilon) \alpha} + \alpha/\sqrt{m}.
\end{align*}
Optimizing in $\alpha$ ($\alpha \sim c\log{m}$) then implies $$ |u(x)| \lesssim \frac{\log{m}}{\sqrt{m}}.$$

\subsection{Sharpness of Agmon's estimate.} Theorem 1 can, in particular, serve as a necessary condition illustrating what is required for the underlying equation to admit a sharp Agmon estimate: if $\exp\left( - \rho_{\lambda}(E,x)\right)$ is indeed roughly comparable to $|u(x)|$, then the harmonic measure of $\partial E$, when seen using the harmonic measure located in $x$ with respect to the Agmon bubble $\Omega_{\alpha}$ cannot be too small. In particular, it is then not possible that the Agmon distance is only realized by a path, there has to be an entire `tube' of paths with roughly comparable length around it.
Indeed, we can send $\alpha \rightarrow \infty$ in Theorem 1 and immediately deduce that for eigenfunctions with
$\| u\|_{L^{\infty}} \sim 1$, we have for all points in the forbidden region that
$$ \omega^{}_x( E) \gtrsim |u(x)|.$$

This is not terribly surprising: if the function $u$ was harmonic, these two quantities coincide. In the forbidden region,
we would expect the function $u$ to decay even faster than a harmonic function, so the estimate should be somewhat
lossy. Simultaneously, since $\Delta u = 2 (V(x) - \lambda) u$, it is certainly conceivable that $V(x)$ is just a tiny bit larger
than $\lambda$: then the function is nearly harmonic. We will now argue that as soon as we are strictly away from the
harmonic regime, there exists an improved estimate in terms of the Agmon metric. We will fix $\delta > 0$ and slightly
increase the allowed region to
$$ E_{\delta} = \left\{x \in \mathbb{R}^n: V(x) \leq \lambda + \delta\right\}.$$
\begin{theorem}[Agmon tube, informal] If the Agmon estimate is roughly sharp,
$$ |u(x)| \sim \exp\left( - \rho_{\lambda}(E_{\delta}, x)\right),$$
then the harmonic measure satisfies
$$ \omega^{}_x( E_{\delta}) \gtrsim \sqrt{\exp\left( - \rho_{\lambda}(E_{\delta}, x)\right)}.$$
\end{theorem}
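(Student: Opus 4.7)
The plan is to strengthen the soft consequence $\omega_x(E_\delta)\gtrsim |u(x)|$ (obtained by letting $\alpha\to\infty$ in Theorem~1) by refining the Feynman--Kac comparison at its heart. The key idea is to trade H\"older's inequality against an Agmon estimate for a modified, more strongly killed, Feynman--Kac functional. The starting point is the representation
$$ u(x) \;=\; \mathbb{E}_x\left[u(B_\tau)\,\exp\left(-\int_0^\tau (V-\lambda)(B_s)\,ds\right)\right],$$
where $B_t$ is Brownian motion and $\tau$ is the first hitting time of $\partial E_\delta$; the strict gap $V-\lambda\geq \delta$ on $\mathbb{R}^n\setminus E_\delta$ keeps this expectation finite and essentially concentrated on $\{B_\tau\in\partial E_\delta\}$.

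Bounding $|u(B_\tau)|\leq \|u\|_\infty$ and applying H\"older's inequality to $e^{-\int(V-\lambda)}\cdot\mathbf{1}_{\partial E_\delta}$ with conjugate exponents $p$ and $q=p/(p-1)$ yields
$$ |u(x)| \;\lesssim\; \|u\|_\infty\cdot \phi_p(x)^{1/p}\cdot \omega_x(E_\delta)^{1/q},\qquad \phi_p(x):=\mathbb{E}_x\left[e^{-p\int_0^\tau (V-\lambda)}\right].$$
The auxiliary function $\phi_p$ satisfies $\tfrac{1}{2}\Delta\phi_p = p(V-\lambda)\phi_p$, a Schr\"odinger equation with potential $pV$ at energy $p\lambda$, whose Agmon integrand $\sqrt{2}\sqrt{pV-p\lambda}=\sqrt{p}\cdot\sqrt{2(V-\lambda)}$ is precisely $\sqrt{p}$ times the original. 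An Agmon-type estimate for this bounded positive solution (with Dirichlet datum $1$ on $\partial E_\delta$) therefore gives
$$ \phi_p(x) \;\lesssim\; \exp\bigl(-\sqrt{p}\,\rho_\lambda(E_\delta,x)\bigr).$$

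Combining these with the sharpness hypothesis $|u(x)|\sim e^{-\rho_\lambda(E_\delta,x)}$ and simplifying via $1/p+1/q=1$ produces
$$ \omega_x(E_\delta) \;\gtrsim\; \exp\left(-\frac{\sqrt{p}}{\sqrt{p}+1}\,\rho_\lambda(E_\delta,x)\right).$$
The exponent $\sqrt{p}/(\sqrt{p}+1)$ is monotone in $p$ and tends to $1/2$ as $p\searrow 1$; this limit recovers the informal bound $\omega_x(E_\delta)\gtrsim \sqrt{e^{-\rho_\lambda(E_\delta,x)}}$. In precise form one obtains, for each $\varepsilon>0$, a constant $c_\varepsilon>0$ with $\omega_x(E_\delta)\geq c_\varepsilon\, e^{-(1/2+\varepsilon)\rho_\lambda(E_\delta,x)}$.

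The main obstacle will be the Agmon-type decay estimate for the auxiliary $\phi_p$: unlike an $L^2$-eigenfunction, it is a bounded positive solution carrying non-trivial Dirichlet data on $\partial E_\delta$, so the classical exponential-weight/integration-by-parts argument must be adapted, for instance by studying $e^{\sqrt{p}\,\rho_\lambda(E_\delta,\cdot)}\phi_p$ and comparing against explicit exponential barriers for the modified operator. A secondary subtlety is that H\"older degenerates at $p=1$, so the clean exponent $1/2$ is attained only in the limit. The strict gap $\delta>0$ is indispensable throughout: it guarantees both the well-posedness of the Feynman--Kac representation (Brownian motion is effectively killed before escaping to infinity) and the strict positivity of the Agmon integrand, ensuring that $\rho_\lambda(E_\delta,x)$ genuinely grows as one moves into the forbidden region.
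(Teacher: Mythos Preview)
Your argument is correct at the same informal level as the paper's, but the route is genuinely different. The paper proceeds by the transplantation described in \S3.2: the Agmon distance of the rescaled diffusion $\rho_\lambda(E_\delta,\omega_x^*(t))$ behaves, to leading order, like one-dimensional Brownian motion started at $\rho_\lambda(E_\delta,x)$, so $\mathbb{E}[e^{-\tau}]$ is compared with the Laplace transform of the explicit first-passage density $\psi(t)$. One then argues that if the harmonic measure $\omega_x(E_\delta)=\int_0^T\psi(t)\,dt$ is small, only the fastest paths count; computing $\int_0^T e^{-t}\psi(t)\,dt$ via the $\erfc$ asymptotics shows the exponent is $\rho^2/(4T)+T$, which matches $\rho$ only when $T\sim\rho/2$, forcing $\log(1/\omega)\lesssim\rho/2$. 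Your approach bypasses all of this: H\"older splits the expectation into $\phi_p^{1/p}\omega^{1/q}$, and the single structural observation that multiplying $V-\lambda$ by $p$ multiplies the Agmon integrand by $\sqrt{p}$ does the rest. The algebra $\tfrac{p}{p-1}(1-1/\sqrt{p})=\tfrac{\sqrt{p}}{\sqrt{p}+1}\to\tfrac12$ is correct.

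What your approach buys is robustness and brevity: it does not rely on the one-dimensional reduction (which in the paper is only justified up to the transport correction) and gives the exponent directly. What the paper's approach buys is a mechanistic picture of \emph{why} the square root appears, namely the AM--GM structure of $\rho^2/(4T)+T$ and the identification of the critical time scale $T\sim\rho/2$. One remark on your stated ``main obstacle'': within the paper's own framework the Agmon bound for $\phi_p$ is not an obstacle at all. Under the time change of \S3.3 one has $\int_0^\tau (V-\lambda)(B_s)\,ds=\tau^*$, so $\phi_p(x)=\mathbb{E}\,e^{-p\tau^*}$ is simply the Laplace transform of the \emph{same} hitting time at the point $p$; the analysis of \S5.3 (or Theorem~3) with $\lambda$ replaced by $p/2$ gives $\phi_p\lesssim e^{-\sqrt{p}\,\rho_\lambda(E_\delta,x)}$ immediately, with no need for a separate barrier construction.
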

This is a square root improvement over the trivial estimate.
We emphasize that this inequality shows another nontrivial connection between the geometry induced by the Agmon metric and 
 harmonic measure. It also emphasizes the point (see Fig. \ref{fig:paths}) that sharpness of the Agmon metric implies that the Agmon path
 may not be too sensitive: there has to be some neighborhood around it which implies the existence of paths with comparable length.
Theorem 2 is informally phrased and correct up to lower-order correction terms; the proof given in \S \ref{proof:2}. Agmon's estimate (which is actually re-proven as a byproduct of the argument). Using the same framework as Theorem 3,
it could be made precise up to constants.

\begin{center}
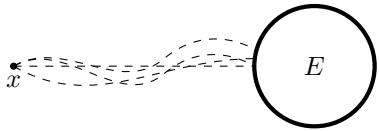
\begin{figure}[h!]
\begin{tikzpicture}[scale=1]
\draw[ultra thick] (6,0) circle (0.8cm);
\filldraw (2,0) circle (0.04cm);
\node at (6-4, -0.2) {$x$};
\draw[dashed] (6-4,0) -- (6-0.8, 0);
\node at (6,0) {$E$};
\draw [dashed] (2,0) to[out=30, in=220] (4, -0.1) to[out=40, in=150] (5.2,0);
\draw [dashed] (2,0) to[out=30, in=220] (4, 0.1) to[out=40, in=150] (5.25,0.2);
\draw [dashed] (2,0) to[out=330, in=180] (5.2, 0.1);
\end{tikzpicture}
\caption{Theorem 2: for Agmon's estimate to be sharp, there must be `many' short paths from $x$ to the allowed region (with `many' is measured by the harmonic measure $\omega_x(\partial E)$).}
\label{fig:paths}
\end{figure}
\end{center}

\vspace{-15pt}

\subsection{Pointwise Estimate.} As a byproduct of the approach, we obtain an explicit effective \textit{pointwise} Agmon estimate. Usually, sharp Agmon estimates show that for every $\varepsilon > 0$ there exists $c_{\varepsilon} > 0$ such that for all $x$ in the forbidden region
$$ |u(x)| \leq c_{\varepsilon} \inf_{y \in \mathbb{R}^n \atop V(y) \leq \lambda} e^{-(1-\varepsilon) \rho_{\lambda}(x,y)}.$$
It is explained in \S 1.2 where this formulation comes from and how it is natural when deducing pointwise estimates from an integral estimate. We were motivated by trying to understand under which assumptions precise pointwise Agmon estimates become possible. 
There is a rather natural condition that arises from the Agmon geometry: if $\rho_{\lambda}(E,x):\mathbb{R}^n \setminus E \rightarrow \mathbb{R}$ is superharmonic as a function of $x$, then we can obtain effective estimates without hidden constants. 
\begin{theorem}
Consider the Agmon metric as a function  $\rho_{\lambda}(E, x): \mathbb{R}^n \setminus E \rightarrow \mathbb{R}_{\geq 0}$. If $\rho_{\lambda}(E, x)$ is superharmonic in the forbidden region, 
$$ \Delta \rho_{\lambda}(E, x)  \geq 0 \qquad \mbox{on}~\mathbb{R}^n\setminus E,$$
then we have the pointwise inequality 
$$  \forall~x \in \mathbb{R}^n \setminus E: \qquad |u(x)| \leq   \|u\|_{L^{\infty}(\partial E)} \cdot e^{-\rho_{\lambda}(E, x)}.$$
\end{theorem}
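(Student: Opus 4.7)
The natural strategy is a direct comparison/maximum-principle argument using the explicit barrier
\begin{equation*}
v(x) := \|u\|_{L^{\infty}(\partial E)}\, e^{-\phi(x)}, \qquad \phi(x) := \rho_{\lambda}(E,x).
\end{equation*}
The goal is to prove $|u(x)| \leq v(x)$ throughout the forbidden region $\mathbb{R}^{n} \setminus E$, which is precisely the stated pointwise bound.

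First I would compute $\Delta v = v\bigl(|\nabla \phi|^{2} - \Delta \phi\bigr)$. Because $\rho_{\lambda}(E,\cdot)$ is defined as an infimum of path integrals with integrand $\sqrt{2(V - \lambda)_{+}}\,|\dot\gamma|$, one has the eikonal inequality $|\nabla \phi|^{2} \leq 2(V - \lambda)$ almost everywhere in the forbidden region. Combining this with the standing hypothesis $\Delta \phi \geq 0$ yields
\begin{equation*}
\Delta v \;\leq\; 2(V - \lambda)\, v, \qquad \text{i.e.} \qquad L v \;:=\; \bigl(-\Delta + 2(V - \lambda)\bigr) v \;\geq\; 0 \quad \text{on } \mathbb{R}^{n}\setminus E.
\end{equation*}
The eigenfunction equation rewrites as $L u = 0$, so the two comparison functions $w_{\pm} := v \pm u$ both satisfy $L w_{\pm} \geq 0$ on the forbidden region.

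Next I would verify the boundary and decay data and invoke the weak minimum principle for $L$. On $\partial E$ we have $\phi = 0$, so $v = \|u\|_{L^{\infty}(\partial E)} \geq |u|$, giving $w_{\pm} \geq 0$ there. At infinity, $V(x)\to\infty$ forces $\phi(x)\to\infty$, hence $v \to 0$; since $u$ is an $L^{2}$ eigenfunction it also decays, so $w_{\pm} \to 0$. On the forbidden region the zeroth-order coefficient $2(V - \lambda)$ is nonnegative, so $L$ satisfies the weak minimum principle on every bounded subdomain: exhausting $\mathbb{R}^{n}\setminus E$ by such subdomains and using the decay of $w_{\pm}$ at infinity upgrades this to $w_{\pm} \geq 0$ everywhere, which is exactly $|u| \leq v$.

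The main technical obstacle is regularity: generically $\phi$ is only Lipschitz, so both the eikonal inequality and the hypothesis $\Delta \phi \geq 0$ must be interpreted in an almost-everywhere or distributional sense, and likewise $\Delta v \leq 2(V-\lambda)v$ must be read distributionally. The natural remedy is either to carry the computation out directly in the sense of distributions, or to approximate $\phi$ by a smooth subsolution $\phi_{\varepsilon}$ via mollification, derive $L v_{\varepsilon} \geq 0$ in the classical sense, and pass to the limit in the maximum principle. A smaller secondary issue, justification of the maximum principle on the unbounded domain $\mathbb{R}^{n}\setminus E$, is handled by the joint decay of $u$ and of the barrier $v$ at infinity.
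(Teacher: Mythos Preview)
Your argument is correct, and it takes a genuinely different route from the paper's own proof.

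The paper never writes down the barrier $v = \|u\|_{L^{\infty}(\partial E)}\,e^{-\rho_{\lambda}(E,\cdot)}$ or invokes the elliptic maximum principle. Instead it works probabilistically: it first establishes the pointwise bound $|u(x)| \leq \|u\|_{L^{\infty}(\partial E)}\,\mathbb{E}\,e^{-\tau_x}$, where $\tau_x$ is the first hitting time on $\partial E$ of the time-changed Brownian motion $\omega_x^{*}$ with generator $\tfrac{1}{2}(V-\lambda)^{-1}\Delta$. It then tracks the one-dimensional process $t\mapsto \rho_{\lambda}(E,\omega_x^{*}(t))$, observes via a second-order Taylor expansion that the hypothesis $\Delta\rho_{\lambda}(E,\cdot)\geq 0$ makes the drift nonnegative (pushing mass away from $0$), and so stochastically dominates $\tau_x$ by the hitting time $\tau_2$ of a pure one-dimensional diffusion of speed $\sqrt{2}$ started at $\rho_{\lambda}(E,x)$. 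An explicit Bessel-function identity then gives $\mathbb{E}\,e^{-\tau_2}=e^{-\rho_{\lambda}(E,x)}$ exactly.

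Your approach is more elementary and self-contained: the computation $\Delta v = v(|\nabla\phi|^{2}-\Delta\phi)$ together with the eikonal bound and the sign hypothesis immediately makes $v$ a supersolution, and the comparison principle finishes. What the paper's longer route buys is the intermediate inequality $|u(x)|\leq \|u\|_{L^{\infty}(\partial E)}\,\mathbb{E}\,e^{-\tau_x}$ (their Theorem~4), which is strictly sharper and feeds directly into Theorems~1 and~2; it also makes transparent how to relax the condition $\Delta\rho_{\lambda}\geq 0$ by comparing with Bessel processes or other drifted diffusions (their \S5.4). Your maximum-principle argument, on the other hand, avoids all stochastic machinery and the Bessel computation entirely, and the regularity and unbounded-domain issues you flag are real but routine and handled exactly as you outline.
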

 A simple computation shows that if the potential $V$ is radial, then the condition is automatically satisfied for monotonically increasing potentials (and even for some decreasing potentials as long as they do not decrease too quickly). 
The inequality is sharp: consider $n=1$ and consider the problem
$$ -\frac12 \Delta u + \frac12 u = 0,$$
i.e. $V=1/2$ and $\lambda = 0$. This problem has the solution $u(x) = e^{-x}$ on $[0,\infty]$. We may interpret, by locally modifying the potential, that the allowed region is given by $E = \left\{0 \right\}$. The Agmon distance is then given by $\rho_{\lambda}(E,x) = x$ (which certainly satisfies $\Delta \rho_{\lambda}(E,x) \geq 0$) and we see that the inequality is sharp. There are many possible variations on Theorem 3 under different conditions on $\Delta \rho_{\lambda}(E, x)$ that follow from the same argument, we refer to \S 5.4 for details.
%

\subsection{Examples.} We conclude with a quick geometric description of what it means for $\Delta \rho_{\lambda}(E, x)$ to have a certain size. Given the nontrivial definition of $\Delta \rho_{\lambda}(E, x)$, one would perhaps not expect a complete geometric description -- our goal will be to paint a suitable picture that conveys the main idea. 
We start with a one-dimensional example (which also applies to higher-dimensional constructions with Cartesian product structure). In the one-dimensional case, assuming for the forbidden region to contain $[0,\infty]$, we have 
$$ \rho_{\lambda}(0, x) = \int_0^x \sqrt{2} \sqrt{V(y) - \lambda} ~dy \qquad \mbox{and} \qquad  \Delta \rho_{\lambda}(0, x) =  \sqrt{2} \frac{d}{dx}\sqrt{V(x) - \lambda}$$
which increases if $V$ increases and decreases otherwise. $ \Delta \rho_{\lambda}(0, x) \geq 0$ is simply a statement about the second derivative of a potential. We will now fix $V(x) - \lambda = c$ to be constant and consider a two-dimensional example shown in Fig. \ref{fig:geome} (left): the forbidden region is inside a sphere of radius 1, the allowed region $E$ is outside the sphere. Then, for $\|x\| \leq 1$ inside the sphere, we have
$$ \rho_{\lambda}(E,x) = \sqrt{2c} \cdot (1-\|x\|) \qquad \mbox{and} \qquad \Delta \rho_{\lambda}(E,x) = - \frac{\sqrt{2c}}{\|x\|}.$$
In particular, we note that the quantity is negative: because of the curvature of the sphere, one is always slightly closer to $E$ than one would be if $E$ was bounded by, say, a hyperplane. The effect is strongest in the origin. One would expect $\Delta \rho_{\lambda}(E,x) < 0$ in regions where $E$ is accessible from various directions.

\begin{center}
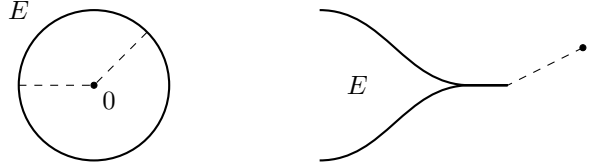
\begin{figure}[h!]
\begin{tikzpicture}[scale=1]
\draw [thick] (0,0) circle (1cm);
\filldraw (0,0) circle (0.04cm);
\node at (0.2, -0.2) {$0$};
\draw [dashed] (0.707, 0.707) -- (0,0) -- (-1,0);
\node at (-1,1) {$E$};
\draw [thick] (3,1) to[out=0, in=180] (5,0) --(5.5,0) -- (5,0) to[out=180, in=0] (3,-1);
\node at (3.5,0) {$E$};
\filldraw (6.5, 0.5) circle (0.04cm);
\draw [dashed] (6.5, 0.5) -- (5.5, 0);
\end{tikzpicture}
\caption{Left: at the center of a sphere of radius 1, the allowed region is outside the sphere. Right: a small part of the allowed region $E$ is relatively isolated but dominates the Agmon distance.}
\label{fig:geome}
\end{figure}
\end{center}

As for the example of a positive Laplacian, we consider Fig. \ref{fig:geome} (right): the allowed region $E$ has a small part that moves far into
the forbidden region. We note that, under fairly general condition, we can then expect $\Delta \rho_{\lambda}(E,x) \geq 0$: since $\partial E$ is only
accessible from very specific directions, it is `further away' than it would be if it were more spread out.
We note that this last example also serves as yet another illustration of how Theorem 1 can be profitably applied: harmonic measure often has a
better global understanding of the size of a set than the function assigning distance to the nearest point from that set.
Suppose the allowed region $E$ has relatively little volume in a certain region of space (see Fig. \ref{fig:geome2}): the set
$E = \left\{x: V(x) \leq \lambda\right\}$ is present in a certain region but is simultaneously so small in terms of volume that it cannot affect
the solution of the PDE very much. It will, however, strongly affect the computation of the Agmon distance since there are now, in this
region of space, many short paths to the set $E$. The Agmon distance can only predict relatively little decay. Consider applying Theorem 1: once $d \geq 3$, the harmonic measure with respect to such long elongated segments will be very small and it will, in contrast to the Agmon metric,
not contribute much. This means that harmonic measure will put more weight on the other parts of $\partial \Omega_{\alpha}$ which then induces additional decay.

 \begin{center}
\begin{figure}[h!]
\begin{tikzpicture}[scale=1]
\draw [ultra thick] (3,1) to[out=0, in=180] (4,0) --(5.5,0) -- (4,0) to[out=180, in=0] (3,-1);
\node at (3.3,0) {$E$};
\filldraw (6, 0.3) circle (0.04cm);
\draw [dashed] (6, 0.3) -- (5.5, 0);
\draw [ultra thick] (3+5,1) to[out=0, in=180] (4+5,0)  to[out=180, in=0] (3+5,-1);
\draw [ultra thick, dashed] (9,0) -- (10.5,0);
\node at (3.3+5,0) {$E$};
\filldraw (6+5, 0.3) circle (0.04cm);
\draw [dashed] (6+5, 0.3) -- (5.5+5, 0);
\draw [thick] (8.5, 1.25) to[out=0, in=90] (12, 0) to[out=270, in=0] (8.5, -1.25);
\node at (10, -0.7) {$\Omega_{\alpha}$};
\end{tikzpicture}
\caption{Applying Theorem 1: there is a long but nearly empty part of the allowed region $E$ dominating the notion of the Agmon distance in that region. The long elongated part becomes nearly invisible through the lense of the harmonic measure when $d \geq 3$.}
\label{fig:geome2}
\end{figure}
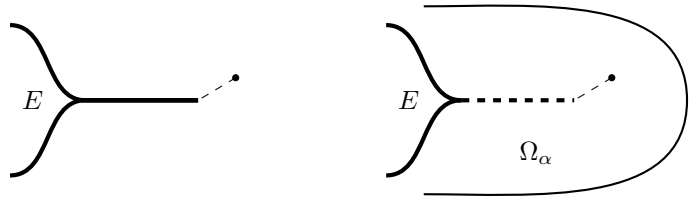
\end{center}

\subsection{Related Results.} We conclude with a short discussion of some of the related literature. The importance of the problem in mathematical physics
has lead to a plethora of results that would be difficult to summarize here (we refer to the book of Agmon \cite{agmon1}, the review by Deift \cite{deift}, the more recent survey of Hislop \cite{hislop} and references therein).
From the perspective of our paper, the story begins with the Agmon metric introduced in Agmon \cite{agmon1}, a precursor can be found in the work of Lithner \cite{lithner}.
The second main ingredient is the probabilistic approach to these questions, we refer to Carmona \cite{carmona0} and Carmona \& Simon \cite{carmona}.
 Carmona \cite{carmona0} uses the Feynman-Kac formula to reinterpret the eigenfunction problem and to both simplify and and improve on earlier results.
Carmona  \& Simon \cite{carmona} use the probabilistic approach to study a problem that is somewhat dual to ours: they use the framework to deduce that Agmon's estimate produces an essentially sharp \textit{lower} bound for the ground state; the main reason why the method is restricted to the ground state  is that one wants to avoid possible cancellation in the path integral and the ground state does not change sign. However, as is clear form the framework and as pointed out by Carmona \& Simon \cite{carmona}, the argument is certainly more versatile than that (since the main contribution comes from one path, the problem naturally localizes along paths). At first, this optimality result may seem to contradict our Theorem 1 which, in a manner of speaking, says that Agmon's estimate is \textit{only} optimal if there is an associated Agmon bubble that is sufficiently large (ground state or not). This discrepancy is resolved by a careful inspection of \cite[Eqs. (2.5) -- (2.9)]{carmona}: the sharpness is with respect to a slightly different Agmon metric associated to a slightly different potential $V_{+}$ (which arises from $V$ by taking maxima over nearby values). Some modification like this is presumably necessary: a well-known open problem of Landis \cite{landis} is as follows: consider
$ -\Delta u = V u$ and  $|V(x)| \leq 1.$
Suppose, moreover, that the solution assumes its global maximum in $x=0$. Is it possible that $u$ decays faster than $\exp(-\|x\|^{1+\varepsilon})$? This is known to be false if $V$ is complex-valued \cite{meshkov} but open if $V$ is real-valued. Note that $\exp(-\|x\|)$ is exactly the rate prediced by the Agmon estimate; thus, understanding precise conditions for the sharpness of the Agmon estimate is intricately linked to difficult problems in PDE.
To the best of our knowledge, our results are new and nothing similar is known in the literature. There are relatively few papers that use the probabilistic approach: a notable exception is a paper of Aizenman \& Simon \cite{aiz}.  Some of our results are philosophically related to earlier results of the author \cite{stein} which was motivated by the use of the Filoche-Mayboroda landscape function \cite{filoche} (a path integral interpretation of which is given in \cite{stein1}).  A generalization of Carmona \cite{carmona0} in the context of RCD spaces was recently given by Thalmaier \& Thompson \cite{thal}.
Denisov \& Kupin \cite{denisov} discuss the connection between diffusions, capacity and the behavior of the Schr\"odinger operator.

\section{Proofs: some preliminary aspects}
\subsection{Initial considerations.}
This section discuss aspects that will be common to all our arguments. We have the equation
$$ - \frac{1}{2} \Delta u + V u = \lambda u.$$
Therefore $u$ is a stationary solution of the heat equation
$$ \frac{\partial u}{\partial t} = \frac{1}{2} \Delta u  + \left( \lambda -  V\right)u.$$
The crucial ingredient of our argument is an analysis of the identity
$$ u(x) = \mathbb{E}_x \left(u(\omega_x(t)) e^{   \lambda t -   \int_0^t V(\omega_x(s)) ds} \right),$$
where $\omega_x(t)$ denotes Brownian motion started in $x$ after $t$ units of time. This is partially
what motivates our scaling of the Laplacian as $\Delta/2$: this scaling makes it the infinitesimal
generator of standard Brownian motion. 
This particular
formulation of the identity, i.e.
$$ u(x) = \mathbb{E}_x \left(u(\omega_x(t)) e^{   \lambda t -   \int_0^t V(\omega_x(s)) ds} \right),$$
will prove to be a little bit cumbersome and we will work with a rescaled version of this
identity which we now motivate. We first start by noting that we can assume w.l.o.g that $u(x) > 0$: the
equation is invariant under multiplication by $(-1)$. We always assume $u(x) > 0$ in the point $x$ in which
we wish to obtain a decay estimate and that $x$ is in the forbidden region, i.e. $V(x) > \lambda$. Note that 
$$ \frac12 \Delta u = (V(x) - \lambda) u(x)> 0 \qquad \mbox{in the forbidden region}$$
which means that there is always a direction of ascent along which $u$ is increasing: this direction of ascent has to end somewhere on $\partial E$, the boundary of the allowed region. In particular, for each $x$ in the forbidden region, we have $|u(x)| \leq \|u\|_{L^{\infty}(\partial E)}$.

\subsection{A heuristic rescaling.} Consider small values $0 \leq t \ll 1$. We expect $\omega_x(t)$ to be distance $\sim \sqrt{t}$ from $x$ (and distributed like a Gaussian centered at $x$).
We also expect, due to the continuity of $V$, that for very small values of $t$
$$  \lambda t -   \int_0^t V(\omega_x(s)) ds = t \cdot (\lambda - V(x_0)) + o(t).$$
This leads us to conclude that, for $0 < t \ll 1$,
\begin{align*}
 u(x) &= \mathbb{E}_x \left(u(\omega_x(t)) e^{   \lambda t -   \int_0^t V(\omega_x(s)) ds} \right) \\
 &\sim \mathbb{E}_x \left(u(\omega_x(t))  e^{t \cdot (\lambda - V(x_0)) + o(t)}\right) \sim e^{t \cdot (\lambda - V(x_0)) + o(t)} \cdot \mathbb{E}_x \left(u(\omega_x(t))  \right).
\end{align*}
This tells us that we should expect $\mathbb{E}_x u(\omega_x(t))$ to be a little bit bigger than $u(x)$. This is perhaps not surprising: in the forbidden region, we have
$$ \frac12\Delta u(x) = ( V(x) - \lambda) u(x) > 0$$
which means that for small balls $B_r(x)$ centered at $x$, the average value of $u$ on $B_r(x)$ is slightly larger than the value in $x$. In particular, for $t$ small
$$ \mathbb{E}~ u(\omega_x(t)) = e^{ (V(x_0) - \lambda) t + o(t)} u(x).$$
This identity suggests a natural rescaling in time.
Let us fix a point $x_0$, pick a very small $0 < \delta \ll 1$ and consider the process of running Brownian motion for time
$ t_{x_0} = \delta/(V(x_0) - \lambda)$.
Then, we expect the local change in the value to behave as in
$$ \mathbb{E}~ u(\omega_{x_0}(t_{x_0})) = e^{\delta + o(\delta)} u(x_0).$$
At this point, this may seem a bit arbitrary, one can always rescale in any way one wishes -- the clear advantage of this particular rescaling becomes more apparent when we try to understand how the Agmon metric 
changes with respect to running Brownian motion for this particular amount of time. 
Let us assume there exists a path $\gamma$ from the allowed region to $E$ such that
$$ \rho_{\lambda}(E, x_0) = \inf_{\gamma} \int_0^1 \max\left( \sqrt{2} \sqrt{V(\gamma(t))- \lambda}, 0\right) |\dot \gamma(t)| dt$$
and let us assume that this minimizing path is sufficiently smooth in a neighborhood of $x_0$ for us to define a tangent
direction $\dot \gamma(1)$.
Assuming the Agmon metric to be
$\rho_{\lambda}(E, x)$ to be locally differentiable around $x=x_0$, it becomes easy to analyze the behavior of the Agmon metric
under this rescaled Brownian motion. For small directions $\varepsilon \cdot y_0$, where $\varepsilon > 0$ and $\|y_0\|=1$, one obtains to leading order
$$ \rho_{\lambda}(E, x_0 + \varepsilon y_0) =  \rho_{\lambda}(E, x_0)  + \left\langle \frac{\dot\gamma(1)}{\| \dot \gamma(1)\|}, \varepsilon y_0 \right\rangle \sqrt{2} \sqrt{V(x_0) - \lambda} + \mbox{l.o.t.}$$

\begin{center}
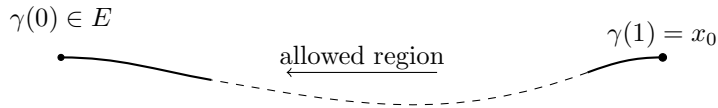
\begin{figure}[h!]
\begin{tikzpicture}[scale=1]
\filldraw (1,0) circle (0.05cm);
\node at (-3,0) {allowed region};
\draw[->] (-2, -0.2) -- (-4,-0.2);
\draw [thick] (0, -0.2) to[out=20,in=180] (1,0) to[out=0, in=200] (1, 0);
\draw [dashed] (0, -0.2) to[out=200,in=350] (-5,-0.3);
\draw [thick] (-5, -0.3) to[out=170, in=0] (-7, 0);
\filldraw (-7, 0) circle (0.04cm);
\node at (1, 0.3) {$\gamma(1) = x_0$};
\node at (-7, 0.5) {$\gamma(0) \in E$};
\end{tikzpicture}
\caption{A local analysis around $x_0$.}
\end{figure}
\end{center}

The inner product of an $n-$dimensional Brownian motion against any vector of length 1 behaves like a one-dimensional Brownian motion. Therefore, we can
expect to leading order for $t$ small that the Agmon distance of Brownian motion is
$$ \rho_{\lambda}(E, \omega_{x_0}(t)) =  \rho_{\lambda}(E, x_0)  + \omega^1_0(t) \sqrt{2} \sqrt{V(x_0) - \lambda} + \mbox{l.o.t,}$$
where $\omega_0^1(t)$ denotes a standard one-dimensional Brownian motion. 
However, because of our special choice of time $ t_{x_0} = \delta/(V(x_0) - \lambda)$,
we end up with
$$ \rho_{\lambda}(E, \omega_{x_0}(t_0)) \sim  \rho_{\lambda}(E, x_0)  + \omega_0^1(t)\sqrt{2} + \mbox{l.o.t.}$$ 
Note that $\omega_0^1(t)\sqrt{2}  = \omega_0^1(2t)$.
We can summarize this insight as follows: by suitable rescaling the running time of Brownian motion, we arrive at a diffusion process $\omega^*$ which satisfies
$$ \mathbb{E}~ u(\omega_{x_0}^*(t_{x_0})) = e^{\delta + o(\delta)} u(x_0)$$
and whose behavior under the Agmon metric behaves, up to first order, like a classical isotropic diffusion in one dimension in the sense of
$$ \rho_{\lambda}(E, \omega^*_{x_0}(t_0)) \sim  \rho_{\lambda}(E, x_0)  +\omega_0^1(2 t_0) + \mbox{l.o.t.}$$ 
\begin{center}
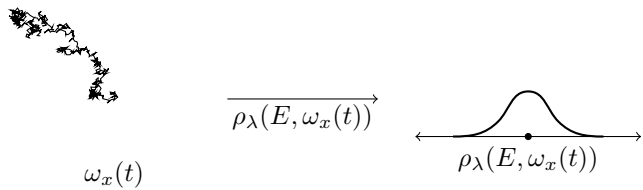
\begin{figure}[h!]
\begin{tikzpicture}[scale=1]
\draw (0,0) \foreach \x in {1,...,500}{--++(rand*0.05,rand*0.05)};
\node at (0,-1) {$\omega_x(t)$};
\draw [->] (1.5, 0) -- (3.5,0);
\node at (2.5,-0.25) {$\rho_{\lambda}(E, \omega_x(t))$};
\draw[<->] (4, -0.5) -- (7, -0.5);
\filldraw (5.5, -0.5) circle (0.04cm);
\node at (5.5, -0.8) {$\rho_{\lambda}(E, \omega_x(t))$};
\draw [thick] (4.5, -0.5) to[out=0, in=225] (5.1, -0.3) to[out=45, in=180] (5.5, 0.1) to[out=0, in=135] (5.9, -0.3) to[out=315, in=180] (6.5, -0.5);
\end{tikzpicture}
\caption{A common ingredient in all these arguments: rescaled Brownian motion behaves with respect to its Agmon distance, to leading order, like classical diffusion on the real line.}
\end{figure}
\end{center}

\subsection{Formalizing the heuristic.}
This line of reasoning therefore suggests that we rescale the main equation as
$$  - \frac{1}{2} \frac{1}{V - \lambda}\Delta u =  u \qquad \mbox{in the forbidden region.}$$
We emphasize that this PDE is only meaningful in the forbidden region since the coefficient blows up as we approach the $\left\{x: V(x) = \lambda\right\}$. It 
is thus meaningful to attach boundary conditions to the boundary of the forbidden region $ \left\{x: V(x) = \lambda\right\}$. Perhaps unsurprisingly, we will always assume that these boundary conditions are
given by $u$ (the values of the true eigenfunction): this ensures that the solutions of both equations (the eigenfunction and the solution of the rescaled equation) coincide in the forbidden region.
Attached to this equation is the parabolic equation
$$\frac{\partial u}{\partial t} -\frac{1}{2} \frac{1}{V - \lambda}\Delta u = u \qquad \mbox{inside the forbidden region}$$
with boundary conditions given by $u$ on the boundary of the forbidden region.
It is easily seen that if the initial condition is given by $u(0,x) = u(x)$, then the solution remains invariant under time and $u(t,x) = u(x)$ for all $t \geq 0$: $u$ is a stationary solution. At the same time, the symbol $ - (1/2)(V-\lambda)^{-1} \Delta$ can be interpreted as the infinitesimal generator of a random process
$\omega^*_{x_0}(t)$ which behaves exactly like Brownian motion rescaled by a factor of $(V(x) - \lambda)^{-1/2}$ in a point $x$. Then
$$ u(x) = \mathbb{E} \left[u(\omega^*_{x_0}(t \wedge \tau)) e^{- (t \wedge \tau)} \right]$$
where $t \wedge \tau = \min(t, \tau)$ and $\tau$ is the stopping time for the first impact on the boundary of the allowed region, $\partial E = \left\{x: V(x) = \lambda\right\}$ on which we have prescribed Dirichlet boundary conditions $u(t,x) = u(x)$.

\subsection{A stochastic inequality.} These arguments lead to a natural pointwise inequality (Theorem 4): we will show (as a byproduct of the proof of Theorem 2) that this simple inequality immediately implies the classical Agmon estimate but also allows for a finer analysis (leading to Theorem 3). We define a stochastic process $\omega_x^*(t)$ for $x$ in the forbidden region as motivated in \S 3.3: for any $x$ such that $V(x) > \lambda$, we define $\omega_x^*(0) = x$ and then, for positive times, as a Brownian motion whose
infinitesimal generator is given by $\Delta/(2V(x) - 2\lambda).$
This process behaves like a rescaled Brownian motion: it moves more quickly when $V(x) - \lambda$ is small and it moves more slowly when $V(x) - \lambda$ is large. For any $x$ in the forbidden region, we can now define a stopping time $\tau_x$ as the first time for which $\omega_x^*(\tau_x) \in \partial E$. Note that $\tau_x \geq 0$ and that $\tau_x = \infty$ is possible.

\begin{theorem} Using this notation, for any $x$ in the forbidden region, 
$$ |u(x)| \leq \|u\|_{L^{\infty}(\partial E)} \cdot \mathbb{E}\left[ e^{-\tau_x} \right].$$
\end{theorem}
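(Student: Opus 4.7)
The plan is to combine a Feynman--Kac representation for the time-changed diffusion $\omega_x^*$ with the subharmonicity of $u$ in the forbidden region. First, I would rewrite the eigenfunction equation in the forbidden region: rearranging $-\tfrac12 \Delta u + Vu = \lambda u$ gives $\Delta u = 2(V-\lambda)u$, so dividing by $2(V-\lambda) > 0$ yields
\[
\mathcal{L} u \;:=\; \frac{1}{2(V-\lambda)}\,\Delta u \;=\; u,
\]
where $\mathcal{L}$ is precisely the infinitesimal generator of $\omega_x^*$. Applying It\^o's formula to $M_t := u(\omega_x^*(t))\,e^{-t}$ produces the vanishing drift $e^{-t}(\mathcal{L} u - u)\,dt = 0$, so $M_{t\wedge \tau_x}$ is a local martingale. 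A standard localization followed by optional stopping yields, for every $t \geq 0$, the Feynman--Kac identity
\[
u(x) \;=\; \mathbb{E}\bigl[\,u\bigl(\omega_x^*(t \wedge \tau_x)\bigr)\, e^{-(t \wedge \tau_x)}\bigr].
\]

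Next I would invoke the maximum principle. Assuming (as in \S 3.1) that $u(x) > 0$, the identity $\tfrac12 \Delta u = (V-\lambda)u \geq 0$ in the forbidden region makes $u$ subharmonic there, so $|u(y)| \leq \|u\|_{L^\infty(\partial E)}$ for every $y$ in the closure of the forbidden region, in particular along the trajectory. Inserting this pathwise bound gives
\[
|u(x)| \;\leq\; \|u\|_{L^\infty(\partial E)}\, \mathbb{E}\bigl[e^{-(t \wedge \tau_x)}\bigr].
\]
Since $e^{-(t\wedge\tau_x)} \in [0,1]$ and converges pointwise to $e^{-\tau_x}$ as $t \to \infty$ (with the convention $e^{-\infty}=0$ on $\{\tau_x=\infty\}$), dominated convergence gives $\mathbb{E}[e^{-(t\wedge\tau_x)}] \to \mathbb{E}[e^{-\tau_x}]$, and the theorem follows.

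The one delicate point is that $\mathcal{L}$ degenerates and its coefficient blows up as $V \downarrow \lambda$, i.e.\ at $\partial E$, so $\omega_x^*$ moves arbitrarily fast near the boundary; justifying the It\^o calculation directly therefore requires some care. The cleanest workaround is to avoid constructing $\omega_x^*$ from scratch and instead obtain the identity above by time-changing ordinary Brownian motion. Starting from the already-justified representation from \S 3.1,
\[
u(x) \;=\; \mathbb{E}\bigl[u(\omega_x(t))\, e^{\lambda t - \int_0^t V(\omega_x(s))\,ds}\bigr],
\]
one runs it up to the first hit $\tau$ of $\partial E$ and passes to $t \to \infty$, which is legitimate because the exponent is nonpositive along paths staying in the forbidden region and the integrand is uniformly bounded by $\|u\|_{L^\infty(\partial E)}$. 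Applying the Volkonskii time change $\omega_x^*(s) := \omega_x(A^{-1}(s))$ driven by the additive functional $A_t = \int_0^t (V-\lambda)(\omega_x(r))\,dr$ produces a diffusion with generator $\mathcal{L}$, transforms $A_\tau$ into $\tau_x$, and identifies $\omega_x(\tau) = \omega_x^*(\tau_x)$; the Brownian identity then becomes exactly the Feynman--Kac representation used above, and the absolute-value argument concludes.
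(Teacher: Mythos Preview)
Your proposal is correct and follows essentially the same route as the paper: establish the Feynman--Kac identity $u(x)=\mathbb{E}[u(\omega_x^*(t\wedge\tau_x))e^{-(t\wedge\tau_x)}]$ (already recorded in \S 3.3), bound the absolute value, and send $t\to\infty$. The paper's proof is a three-line version of this that simply invokes the identity and passes to the limit, using $|u(\omega_x^*(\tau))|\le \|u\|_{L^\infty(\partial E)}$ at the endpoint; your write-up supplies additional technical scaffolding the paper leaves implicit (the It\^o derivation, the subharmonicity bound along the whole trajectory to justify dominated convergence, and the time-change workaround for the coefficient blow-up at $\partial E$), but the argument is the same.
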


\begin{proof}
 Using
 $$ u(x) = \mathbb{E} \left[u(\omega^*_{x_0}(t \wedge \tau)) e^{- (t \wedge \tau)} \right],$$
we may trivially estimate, by letting $t \rightarrow \infty$,
 \begin{align*}
  \left|u(x)\right| &= \left| \mathbb{E} ~u(\omega^*_{x_0}(t \wedge \tau)) e^{- (t \wedge \tau)}  \right| \\
  &\leq  \mathbb{E}   \left| u(\omega^*_{x_0}( \tau)) e^{-  \tau}  \right| \leq \|u\|_{L^{\infty}(\partial E)} \cdot \mathbb{E}(e^{-\tau}).
  \end{align*}
  \end{proof}
 We emphasize that with regards to exponential decay of the eigenfunction, then this inequality will usually not lose too much (this is also evidenced by it implying Theorem 2 and Theorem 3). A priori, the stopping time of an inhomogeneous diffusion process sounds too difficult to control, however, most of the difficulty gets neatly encapsulated in the definition of the Agmon metric (see \S 3.2).

\section{Proof of Theorem 1} 
\subsection{Proof of Theorem 1}
\begin{proof} Fix a point $x$ in the forbidden region $x \in \mathbb{R}^n \setminus E$, fix an arbitrary $\alpha >  \rho_{\lambda}(E,x) $ and define the Agmon bubble as
$$\Omega_{\alpha}  = \left\{ y \in \mathbb{R}^n \setminus E: \rho_{\lambda}(E,y) \leq \alpha \right\}.$$
The boundary of $\Omega_{\alpha}$ is given by a subset of $\partial E$ and a set of points for which $\rho_{\lambda}(E,y) = \alpha$.
We will study the behavior of the equation
$$\frac{\partial u}{\partial t} -\frac{1}{2} \frac{1}{V - \lambda}\Delta u = u \qquad \mbox{inside} ~\Omega_{\alpha},$$
where the initial values are given by $u$ and the boundary values, for all times $t>0$, are given by $u$. The solution of the equation is $u(t,x) = u(x)$. Now
$$ u(x) = \mathbb{E} \left[u(\omega^*_{x}(t \wedge \tau)) e^{- (t \wedge \tau)} \right],$$
where $\tau$ is the first hitting time of the particle on the boundary $\partial \Omega_{\alpha}$ (see \S 3.3). Letting $t \rightarrow \infty$ we estimate
$$ \mathbb{E} \left[u(\omega^*_{x}(t \wedge \tau)) e^{- (t \wedge \tau)} \right] \leq  \mathbb{E} \left|u(\omega^*_{x}( \tau)) \right|.$$

The remaining question is now simple: where do the points end up when hitting $\partial \Omega_{\alpha}$? There are two cases: they end up on
the boundary of the allowed region $\partial E \cap \partial \Omega_{\alpha}$ or on the boundary of $\partial \Omega_{\alpha} \setminus \partial E$. The two cases are
$$ \rho_{\lambda}(E, \omega^*_{x}( \tau)) = 0 \qquad \mbox{and} \qquad \rho_{\lambda}(E, \omega^*_{x}( \tau)) = \alpha.$$
We have
\begin{align*}
 \mathbb{E} \left|u(\omega^*_{x}( \tau)) \right| &=  \mathbb{E}  \left( \left|u(\omega^*_{x}( \tau)) \right| \big|\rho_{\lambda}(E, \omega^*_{x}( \tau)) = 0 \right)\cdot \mathbb{P}\left(\rho_{\lambda}(E, \omega^*_{x}( \tau)) = 0 \right) \\
 &+ \mathbb{E}  \left( \left|u(\omega^*_{x}( \tau)) \right| \big|\rho_{\lambda}(E, \omega^*_{x}(\tau)) = \alpha \right)\cdot \mathbb{P}\left(\rho_{\lambda}(E, \omega^*_{x}( \tau)) = \alpha \right).
\end{align*}
At this point, we recall that $\omega^*_x(t)$ is slightly different from the classical Brownian motion $\omega_x(t)$ since its local speed of propagation depends on the size of $V(x) - \lambda$. However, while this may lead to locally larger or smaller step-sizes, it has no impact on the direction of the
random walk, it only corresponds to a change of time. Alternatively, one could interpret it as a local change of the metric and appeal to the fact that harmonic measure is invariant under conformal changes of a metric: in any case, the harmonic measure with respect to $\omega^*_x$ and $\omega_x$ coincides.
Recalling the definition of harmonic measure
\begin{align*}
\mathbb{P}\left(\rho_{\lambda}(E, \omega^*_{x}( \tau)) = 0 \right) = \omega_x^{(\Omega_{\alpha})}(\partial E) \end{align*}
this inequality simplifies to
$$ |u(x)| \leq \sup_{x \in \partial \Omega \setminus \partial E} |u(x)| + \omega_x^{(\Omega_{\alpha})}(\partial E) \cdot \|u\|_{L^{\infty}}.$$
Applying the Agmon estimate on $\partial \Omega \setminus \partial E$, we arrive that
$$ |u(x)| \leq c_{\varepsilon} e^{- (1-\varepsilon) \alpha} + \omega_x^{(\Omega_{\alpha})}(\partial E) \cdot \|u\|_{L^{\infty}(\mathbb{R}^n)}.$$
\end{proof}

\subsection{Champagne regions.}
This subsection illustrates a class of examples where Agmon's estimate fails to capture the geometry of the eigenfunction in a very strong sense: so-called champagne regions. 
 Denoting a ball of radius $r$ centered at $x$ by $B(x,r)$, champagne domains are domains of the form
 $$ \Omega = \mathbb{R}^n \setminus \bigcup_{i=1}^{\infty} B(x_i, r_i).$$
 It is, for simplicity, often assumed that $0 \in \Omega$. One may think of the $B(x_i, r_i)$ as little bubbles (hence the name). There is now a rather
 natural question: is there a positive probability that Brownian motion started in 0 never hits any of the balls? In terms of PDEs, the question is whether the solution of
 \begin{align*}
 \Delta u &= 0 \qquad \mbox{in}~\Omega\\
 u&= 0 \qquad \mbox{at}~\partial \Omega \\
 u &= 1 \qquad \mbox{at infinity}
 \end{align*}
 satisfies $u(0) > 0$. This will, naturally, depend on the configuration of the balls: if they form an impenetrable barrier between 0 and $\infty$, then it is naturally not possible. The main insight is that other examples exist: as long as there are sufficiently many small balls, the likelihood may simply be 0. 
Akeroyd \cite{ake} showed that for any $\varepsilon > 0$ there exists a champagne subregion of the unit disk $B(0,1) \subset \mathbb{R}^2$ such that
$$ \sum_{i=1}^{\infty} r_k \leq \varepsilon$$
and nonetheless one of the bubbles will be hit with likelihood 1 by Brownian motion started in the origin. We also refer to subsequent work of
Carroll \& Ortega-Cerda \cite{carroll}, Gardiner \& Ghergu \cite{gardiner} and Ortega-Cerda \& Seip \cite{ortega}. The relevance for our problem is as follows: consider a champagne domain and make the potential $V$ very large inside the bubbles. Then the Agmon metric will not be able to conclude decay: after all, there are still short paths from most points outside the bubbles to most others. In contrast, the Agmon bubble $\Omega_{\alpha}$ will inherit the champagne structure and the harmonic measure of $\partial E$ will be as small as one wishes (therefore, with Theorem 1, capturing decay that is invisible to the standard Agmon metric). 

\section{Proof of Theorem 3}
\subsection{The Idea.}
The idea behind the proof of Theorem 2 can be summarized as follows: we start with the identity
$$ u(x) = \mathbb{E} \left[u(\omega^*_{x}(t \wedge \tau)) e^{- (t \wedge \tau)} \right].$$
In the proof of Theorem 2, we will abandon all control over the function $u$: our hope is that most Brownian particles will require a
long time until they hit $\partial E$. Formally, we will let $t \rightarrow \infty$ and argue that
$$ u(x) = \mathbb{E} \left[u(\omega^*_{x}(t \wedge \tau)) e^{- (t \wedge \tau)} \right] \leq \|u\|_{L^{\infty}(\partial E)} \cdot \mathbb{E} \left[e^{-  \tau} \right].$$
We note that if $u$ is roughly comparable to $\|u\|_{L^{\infty}(\partial E)}$ on $\partial E$ on a set of substantial measure, then this estimate is not all that lossy. It has replaced our problem of estimating $u$ with a new problem: estimating $\mathbb{E} \left[e^{-  \tau} \right]$, where $\tau$ is the first hitting time of a (rescaled) Brownian particle started in $x$ on $\partial E$.
We first note an important aspect of the expression $\mathbb{E} \left[e^{-  \tau} \right]$: due to the exponential weight, the expression does not at all care about paths that eventually arrive at $\partial E$ after a long time has passed. It only cares about the small number of paths that arrive at $\partial E$ relatively quickly. This, in essence, is one of the reasons why Agmon's estimate is so successful in a large number of cases: the reduction to a one-dimensional path may be lossy but is not overall lossy with respect to the relatively tiny fraction of particles that traverse along that particular path; the particles who deviate away from the path and take another route to $\partial E$ require, in all likelihood, more time and therefore do not contribute substantially to the expectation.

\subsection{A transplantation procedure.} We proceed as follows: instead of working directly with $\mathbb{E} \left[e^{-  \tau} \right]$, we take a particular particle and track the behavior of the Agmon metric under its flow. We recall that $\tau$ is the first time the particle hits $\partial E$, which, equivalently, is first time that the Agmon metric assumes the value 0. We start with a local Taylor expansion.
 Let $f:\mathbb{R}^n \rightarrow \mathbb{R}$ be arbitrary and let $\omega_x^*(t)$ be the rescaled Brownian motion. Since we make no particular assumptions about $f$, we may assume w.l.o.g. that $x=0$. Then, for $t$ small,
$$ f( \omega^*_0(t)) = f(0) + \left\langle \nabla f, \omega^*_0(t) \right\rangle + \frac{1}{2} \left\langle \omega^*_0(t), (D^2 f(0)) \omega^*_0(t)\right\rangle + \mbox{l.o.t.}$$
We note that the linear term behaves like a standard one-dimensional (rescaled) Brownian motion scaled by $\| \nabla f\|$. Since $\nabla f$ is a vector of fixed size, we can write
$$  \left\langle \nabla f, \omega^*_0(t) \right\rangle = \| \nabla f \|  \left\langle \frac{\nabla f}{\| \nabla f\|}, \omega^*_0(t) \right\rangle$$
and use the fact that the inner product of an $n-$dimensional Brownian motion and a fixed vector of size 1 behaves like a one-dimensional Brownian motion. Here, we end up a one-dimensional Brownian motion scaled by $\| \nabla f\|$.
 The quadratic term has an expectation that can be explicitly computed. We start by computing it for the classical Brownian motion $\omega_0(t)$.
$\omega_0(t)$ is distributed according to a Gaussian centered at $0$: we may thus write 
$$ \omega_0(t) = \left(\gamma_1, \dots, \gamma_n\right) \qquad \mbox{where} \quad \gamma_i \sim \sqrt{t} \cdot \mathcal{N}(0,1)$$
are $n$ independent and identically distributed Gaussian random variables. Then
\begin{align*}
 \mathbb{E} ~ \frac{1}{2} \left\langle \omega_0(t), (D^2 f(0)) \omega_0(t)\right\rangle &= \mathbb{E}~ \frac{1}{2} \sum_{i,j=1}^{n} (D^2 f(0))_{ij} \gamma_i \gamma_j \\
 &= \frac12 \sum_{i=1}^{n} (D^2 f(0))_{ii} \cdot \mathbb{E} \gamma_i^2 \\
 &= \frac{t}{2} \mbox{tr} (D^2 u(0)) = \frac{t}{2} \Delta u(0).
\end{align*}
Since $\omega_x^*(t)$ is, locally, merely a rescaling of $\omega_x(t)$ we have, for $t$ small,
$$ \mathbb{E}  \frac{1}{2} \left\langle \omega^*_x(t), (D^2 f(x_0)) \omega^*_x(t)\right\rangle  = \frac{t}{2} \frac{1}{V(x) - \lambda} (\Delta f(x)) + o(t).$$

In particular, if $\Delta f(x) \geq 0$, then we observe that the primary driving factor is diffusion while the lower-order factor (which acts as transport) is moving mass further outside in the sense that we have, for $t$ small,
\begin{align*}
 \mathbb{E} f(\omega^*_x(t)) &= f(x) +  \left\langle \nabla f(x), \omega^*_0(t) \right\rangle +  \frac{t}{2} \frac{1}{V(x) - \lambda} \Delta f(x) + o(t) \\
 &\geq f(x) + \left\langle \nabla f(x), \omega^*_0(t) \right\rangle.
\end{align*}
We will now use this inequality of a particular choice of $f$
$$ f(x) = \rho_{\lambda}(E,x).$$
As we just discussed, if $\Delta f\geq 0$, then we may interpret the process $f(\omega_x^*(t))$ as being primarily a one-dimensional random walk, this is the term $ \left\langle \nabla f(x), \omega^*_0(t) \right\rangle$ which we expect to be at scale $\sim \sqrt{t}$, and a higher-order term at scale $\sim t$ whose only effect is to \textit{increase} $f(\omega_x^*(t))$ (in expectation). By ignoring this higher-order term, we end up with a different stochastic process whose expected hitting time strictly dominates our original underlying process: this new process will hit 0 faster. In physical terms, we are dealing with diffusion coupled with a transport term where the transport term has the effect of moving mass further away from the origin. Hitting times are thus stochastically dominated by the pure diffusion term. However, a pure stochastic diffusion is easier to analyze than a diffusion with drift. We note that our diffusion has a coefficient depending on location. It remains to understood the relevant scaling: recalling the definition of the Agmon metric
$$ \rho_{\lambda}(x,y) = \inf_{\gamma} \int_0^1 \max\left( \sqrt{2} \sqrt{V(\gamma(t))- \lambda}, 0\right) |\dot \gamma(t)| dt,$$
we see that, at least in places where the geodesic is locally unique,
$$ \| \nabla \rho_{\lambda}(E,x)\| = \sqrt{2} \sqrt{V(x)-\lambda}.$$
At the same time, the rescaled Brownian motion moves locally rescaled by the factor $\sqrt{V(x) - \lambda}$ and we see that the two terms undergo an exact cancellation: the resulting effective speed of $\rho_{\lambda}(E,\omega_x^*(t))$ as a real-valued diffusion is \textit{constant} diffusion with speed $\sqrt{2}$. This very nice fact could be used to work backwards and would allow for a natural derivation of the Agmon metric from scratch. Our problem has now simplified: given a point $x > 0$ on the real line and given a pure diffusion process moving with \textit{constant} speed $\sqrt{2}$, what can be said about $\mathbb{E}\left( e^{-\tau_2}\right),$ where $\tau_2$ is the first hitting time of 0? This relates to our original question via stochastic domination as
$$ \mathbb{E} \left[e^{-  \tau} \right] \leq \mathbb{E} \left[e^{-  \tau_2} \right].$$

\subsection{Final Computations.} This problem is classical: we use \cite[Eq. 2.1]{ham}. This identity, valid for any $\lambda > 0$, and classical (unscaled) diffusion started in $x$ is
$$ \mathbb{E} \left(e^{-\lambda\cdot \tau_x}\right) = \frac{\sqrt{2} \sqrt{x \sqrt{2 \lambda}}}{\sqrt{\pi}} \cdot K_{-1/2}(x \sqrt{2\lambda}),$$
where $K_{\nu}(x)$ is the modified Bessel function of the second kind, a solution of
$$ z^2 \frac{d^2 w}{dz^2} + z \frac{dw}{dz} - (z^2+\nu^2)w = 0.$$
Since we have (spatial) diffusion speed $\sqrt{2}$, this corresponds to a time rescaling by a factor of $2$ and requires us to use $\lambda=1/2$. Then
$$  \mathbb{E} \left[e^{-  \tau_2} \right] = \frac{\sqrt{2} \sqrt{x}}{\sqrt{\pi}} \cdot K_{-1/2}(x).$$
$K_{-1/2}(x)$ is a special kind of Bessel function and has a simpler expression
$$ K_{-1/2}(x) = \sqrt{\frac{\pi}{2}} \frac{e^{-x}}{\sqrt{x}}.$$
It remains to specify $x$: our initial starting point for the diffusion is simply given by Agmon's distance $\rho_{\lambda}(E,x)$. Therefore,
$$  \mathbb{E} \left[e^{-  \tau_2} \right] \leq e^{-\rho_{\lambda}(E, x)}$$
and thus
\begin{align*}
 |u(x)|  &\leq \|u\|_{L^{\infty}} \cdot  \mathbb{E} \left[e^{-  \tau} \right] \\
 &\leq  \|u\|_{L^{\infty}} \cdot  \mathbb{E} \left[e^{-  \tau_2} \right]  \leq \|u\|_{L^{\infty}} \cdot e^{-\rho_{\lambda}(E, x)}.
 \end{align*}
This concludes the proof of Theorem 3.

\subsection{Variations.} The argument naturally allows for variations. 
Running through the proof of Theorem 3, we see that our transplantation procedure ends up resulting in a pure (rescaled) diffusion on the real
line coupled with a transport term: for small values of $t$, we have
$$ \mathbb{E} \rho_{\lambda}(E,\omega^*_x(t)) = \rho_{\lambda}(E,x) +  \left\langle \nabla  \rho_{\lambda}(E,x), \omega^*_0(t) \right\rangle +  \frac{t}{2} \frac{1}{V(x) - \lambda} (\Delta  \rho_{\lambda}(E,x)) + o(t).$$
We know that $\left\langle \nabla  \rho_{\lambda}(E,x), \omega^*_0(t) \right\rangle$ behaves like classical Euclidean diffusion scaled up by a factor of $\sqrt{2}$ in space or, equivalently, a factor of $2$ in time. If the quadratic transport term assumes negative values, then we can still compare the arising process with other stochastic process, for example with the Bessel process whose 
$$\mbox{infinitesimal generator is} \quad \frac{1}{2} \frac{d^2}{dx^2} + \frac{2\nu + 1}{2x} \frac{d}{dx}.$$
We see that the Bessel process also has negative transport (and thus smaller hitting times) when $\nu < -1/2$. 
However, the relevant identities still exist, see e.g. \cite[Eq. 2.4]{ham}, and we can deduce (for the Bessel process) that
$$ \mathbb{E}\left[ e^{- \tau_x/2}\right] = \frac{2^{\nu+1}}{\Gamma(|\nu|) x^{\nu}}  \cdot K_{\nu}(x).$$
This part of the argument is rather flexible and one could use many other stochastic processes: each of these process
can naturally induce a stochastic domination by comparing the second order expansion
$$ \mathbb{E} \rho_{\lambda}(E,\omega^*_x(t)) = \rho_{\lambda}(E,x) +  \left\langle \nabla  \rho_{\lambda}(E,x), \omega^*_0(t) \right\rangle +  \frac{t}{2} \frac{1}{V(x) - \lambda} (\Delta  \rho_{\lambda}(E,x)) + o(t)$$
with the infinitesimal generator of the process and using domination of the transport term as a condition. If, for this process,
there is a bound on $ \mathbb{E}\left[ e^{-  \lambda \cdot \tau_x}\right]$, the bound then naturally transfers to our setting.

\section{Proof of Theorem 2} \label{proof:2}
\begin{proof} We use again the identity
$$ |u(x)| \leq \|u\|_{L^{\infty}(\partial E)} \cdot \mathbb{E} \left[ e^{-  \tau} \right].$$
Invoking harmonic measure, we know that the likelihood of a particle hitting $\partial E$ at all is given by 
$$ \mathbb{P}(\tau < \infty) = \omega_x^{(\mathbb{R}^n \setminus E)}(\partial E).$$
This allows for an immediate proof of the trivial upper bound
\begin{align*}
 |u(x)| &\leq \|u\|_{L^{\infty}(\partial E)} \cdot \mathbb{E} \left[ e^{-  \tau} \right] \\
 &\leq  \|u\|_{L^{\infty}(\partial E)} \mathbb{P}(\tau < \infty) =   \|u\|_{L^{\infty}(\partial E)} \cdot \omega_x^{(\mathbb{R}^n \setminus E)}(\partial E).
\end{align*}
It is now natural to ask whether we can improve on this trivial estimate. Clearly, the expected likelihood is maximized if the particles that end up hitting $\partial E$ are simultaneously the ones that do it the fastest -- and the trivial estimate follows from assuming that they do so instantaneously which is clearly not the case. 
This suggest that we should try to understand the distribution of arrival time -- as in the classical Agmon estimate, we ignore the transport term as a lower-order contribution. 
$ t \rightarrow \rho_{\lambda}(E, \omega_x^*(t))$ 
behaves like classical diffusion started at distance $\rho_{\lambda}(E, x)$  from the origin scaled up by a factor of $\sqrt{2}$ in space (or, equivalently, a factor of 2 in time). The first passage time of classical diffusion is then given by
$$ \psi(t) = \frac{\rho_{\lambda}(E, x)}{\sqrt{4 \pi  t^3}} \exp\left(-\frac{\rho_{\lambda}(E,x)^2}{4 t} \right).$$
In particular, we have
$$ \int_0^{\infty} \psi(t) dt = 1$$
as well as
$$ \int_0^{\infty} e^{-t} \psi(t) dt = \exp\left( - \rho_{\lambda}(E,x)\right)$$
which recovers the classical Agmon decay rate: indeed, this can be seen as an alternative argument leading to the Agmon estimate. Our remaining question is: if $T$ is chosen such that
$$ \int_0^{T} \psi(t) dt = \omega_x^{(\mathbb{R}^n \setminus E)}(\partial E),$$
what can be said about the expected size of the integral
$\int_0^{T} e^{-t} \psi(t) dt?$
We first observe that
$$ \omega_x^{(\mathbb{R}^n \setminus E)}(\partial E) = \int_0^{T} \psi(t) dt  = \erfc\left(\frac{ \rho_{\lambda}(E,x)}{2\sqrt{T}}\right),$$
where $\erfc$ is the complementary error function. If $x \sim 1$, then $\erfc(x) \sim 1$, the interesting case is when the harmonic
measure is rather small. In that case, we have the asymptotic expansion capturing the leading order asymptotic
$$ \erfc(x) \sim \frac{e^{-x^2}}{\sqrt{\pi} x}.$$
Thus, again up to leading order,
$$ \frac{\rho_{\lambda}(E,x)^2}{4T} \sim \log\left(\frac{1}{\omega_x^{(\mathbb{R}^n \setminus E)}(\partial E)} \right)$$
which gives us, to leading order, insight into the size of $T$. A useful identity is
\begin{align*}
  \int_0^{T} e^{-t} \psi(t) dt  &= \frac{ \exp\left( - \rho_{\lambda}(E,x)\right)}{2} \erfc\left(\frac{ \rho_{\lambda}(E,x)}{2 \sqrt{T}} - \sqrt{T} \right) \\
  &+ \frac{ \exp\left(  \rho_{\lambda}(E,x)\right)}{2}  \erfc\left(\frac{ \rho_{\lambda}(E,x)}{2 \sqrt{T}} + \sqrt{T} \right).
\end{align*}
A simple analysis of the function
$$ \frac{e^{-r}}{2} \erfc\left( \frac{r}{2 \sqrt{t}} - \sqrt{t} \right) + \frac{e^r}{2} \erfc \left( \frac{r}{2\sqrt{t}} + t\right)$$
shows that the function is essentially $e^{-r}$ if $r \lesssim 2t$ and decays dramatically faster than $e^{-r}$ for $r \gtrsim 2t$. The transition region is $r \sim t \pm \sqrt{t}$.  This can be made precise as follows: if $r \gtrsim 2t$, then both $\erfc-$terms have positive arguments and we may use the asymptotic
 $\erfc(x) \sim e^{-x^2}$, to arrive at
$$   \int_0^{T} e^{-t} \psi(t) dt  \sim \exp\left( -\frac{ \rho_{\lambda}(E,x)^2}{4T} - T\right).$$
Observe that 
$$ \frac{ \rho_{\lambda}(E,x)^2}{4T} + T \geq  \rho_{\lambda}(E,x) \qquad \mbox{with equality iff} \qquad T = \frac{ \rho_{\lambda}(E,x)}{2}.$$
Therefore, a necessary condition of the Agmon estimate being sharp is that 
$ 2T \gtrsim \rho_{\lambda}(E, x)$
from which we deduce
$$  \log\left(\frac{1}{\omega_x^{(\mathbb{R}^n \setminus E)}(\partial E)} \right) \sim  \frac{\rho_{\lambda}(E,x)^2}{4T} \gtrsim  \frac{ \rho_{\lambda}(E,x)}{2}$$
from which the desired claim follows.
\end{proof}

\textbf{Remark.} We note that the argument is accurate up to leading order but is ignoring possible lower order effects due to transport: this effect can either be
absorbed in rescaling diffusion a little (leading to $(1-\varepsilon)$ factors as in the typical Agmon estimate) or by controlling
$\Delta \rho_{\lambda}(E_{\delta}, x)$ as in the proofs of Theorem 3.

\end{document}